\newif\ifdetails
\newcommand{\DETAIL}[1]%
{\ifdetails\par\fbox{\begin{minipage}{0.9\linewidth}\textit{Detail:}
			#1\end{minipage}}\par\fi}
\newcommand{\TODO}[1]%
{\ifdetails\par\fbox{\begin{minipage}{0.9\linewidth}\textbf{TODO:}
			#1\end{minipage}}\par\fi}
\newtheorem{lem}{Lemma}
\newtheorem{pro}[subsection]{Proposition}
\newtheorem{thm}[subsection]{Theorem}
\theoremstyle{remark}
\newcommand{\old}[1]{{}}
\DeclareMathOperator{\N}{\eta}
\title{On the maximum number of connected induced subgraphs of a graph}
\author{Audace A. V. Dossou-Olory}
\address{Audace A. V. Dossou-Olory \\ D{\'e}partement d'Hydrologie et Gestion des Ressources en Eau \\ Institut National de l'Eau \\ and  Centre d'Excellence d'Afrique pour l'Eau et l'Assainissement \\ Universit\'e d'Abomey-Calavi \\ B\'enin}
\email{audace@aims.ac.za}
\subjclass[2020]{Primary 05C30, 05C35; secondary 05C40, 05C69, 05C70}
\keywords{minimum degree, independence number, vertex cover number, vertex connectivity, edge connectivity, number of bridges, chromatic number, connected induced subgraphs}
\begin{document}

\begin{abstract}
We characterise the structure of those graphs of a given order which maximise the number of connected induced subgraphs for seven different graph classes, each with other prescribed parameters like minimum degree, independence number, vertex cover number, vertex connectivity, edge connectivity, chromatic number, number of bridges, thereby contributing to filling a gap in the literature.

\end{abstract}

\maketitle

\section{Introduction and preliminaries}

Let $G$ be a simple graph (undirected, no loops or multiple edges) with finite vertex set $V(G)$ and edge set $E(H)$. The order of $G$ is the cardinality $|V(G)|$. The degree of $u \in V(G)$ is the number of vertices adjacent to $u$ in $G$; we denote by $\delta (G)$ the minimum degree of the vertices of $G$. A subgraph of $G$ is a graph $H$ such that $V(H)\subseteq V(G)$ and $E(H)\subseteq E(G)$. We write $G-e$ (resp. $G-u$) or $G-S$ for the subgraph of $G$ obtained by deleting an edge $e$ (resp. a vertex $u$) or a set of edges/vertices $S$. An induced subgraph of $G$ is a subgraph obtained by deleting a set of vertices. We call $G-(V(G) \backslash S)$ the subgraph induced by $S$, which consists of $S$ and all edges whose endvertices are contained in $S$. An independent set in $G$ is a set of pairwise nonadjacent vertices. Thus, a set $S$ of vertices is an independent set if and only if the subgraph induced by $S$ has no edges. The maximum size of an independent set in $G$ is called its independence number, denoted by $\alpha(G)$.

\medskip
The graph $G$ is connected if every pair of vertices in $G$ belongs to a path; otherwise, $G$ is disconnected. A bridge (or cut-edge) of $G$ is an edge whose deletion increases the number of components (remaining connected parts) of $G$. It is known (see as eg.~\cite{West}) that an edge of $G$ is a bridge if and only if it does not belong to a cycle of $G$. The vertex connectivity of $G$, which we denote by $c(G)$, is the minimum size of a vertex set $S$ such that $G-S$ has more components than $G$. The edge connectivity of $G$, which we denote by $e(G)$, is the minimum size of an edge set $S$ such that $G-S$ has more components than $G$. A vertex cover of $G$ is a set $S\subset V(G)$ that contains at least one endvertex of every edge of $G$; we say that the vertices in $S$ cover $E(G)$. The vertex cover number, denoted by $\beta (G)$, is the minimum size of a vertex cover of $G$.

\medskip
The graph $G$ is $l$-colourable if we can assign one of $l$ colours to each vertex so that adjacent vertices have different colours. If G is $l$-colourable, but not $(l-1)$-colourable, we say that the chromatic number of $G$ is $l$. In other words, the chromatic number of $G$ is the minimum number of colours needed to color the vertices of $G$ such that adjacent vertices have different colours.

A graph $G$ is complete if its vertices are pairwise adjacent; the complete graph of order $n$ is denoted by $K_n$. For all the notation defined on a graph $G$, the context indicates the usage, whether it is followed by the name $G$ or not.

\medskip
An extremal problem asks for the minimum or maximum value of a fonction over a class of objects. In graph theory, we use ``extremal problem" for finding an optimum over a class of graphs. In our case, we are concerned with determining the maximum number of connected induced subgraphs, denoted by $\N(G)$, for simple graphs $G$ with given order and other structural parameters.

Several upper and lower bounds on the number of connected subgraphs or connected induced subgraphs in terms of other graph parameters have been investigated, see for example~\cite{Alokshiya2019, Audacegenral2018, AudaceGirth2018, AudaceCut2019, Audacegenral2023, EricAudace, Sommer, Maxwell2014,  Pandey}. Studies on extremal problem in this direction seems to have begun with Pandey and Patra~\cite{Pandey} on the number of connected (not necessarily induced) subgraphs of both graphs and unicyclic graphs, as a natural extension of the number of subtrees of trees. Among other things, we know by~\cite{Audacegenral2018} that the path uniquely realises the minimum number of connected induced subgraphs among all connected graphs of a given order, and that the maximum is only attained by the complete graph. Moreover, the results in~\cite{Audacegenral2018} can be generalised to graphs with given order and number of components. On the other hand, paper~\cite{AudaceCut2019} deals with the class of all connected graphs with given order and number of cut vertices, as well as the class of all connected graphs with given order and number of pendant vertices. In another paper~\cite{EricAudace}, we studied inequalities that relate the sum of a graph invariant to the same invariant of its complement, also referred to as Nordhaus-Gaddum type inequalities, for the number of connected induced subgraphs of a graph with given order.
 
\medskip
In this note, we characterise the unique extremal graph of a given order which maximise the number of connected induced subgraphs in certain graph classes that have not been considered so far, namely for each of the following parameters: minimum degree, independence number, vertex cover number, vertex connectivity, edge connectivity, chromatic number, number of bridges.

\section{Main results}

For a graph $G$ and $u\in V(G)$, we denote by $\N(G)_{u}$ the number of those connected induced subgraphs of $G$ that contain vertex $u$.
 
Let $n,\delta$ be positive integers such that $n-2 \geq \delta$. We construct the graph $G_{n, \delta}$ by taking one copy of $K_{n-1}$ and adding another vertex to exactly $\delta$ vertices of $K_{n-1}$. Note that $G_{n, \delta}$ has order $n$ and minimum degree $\delta$.

\begin{pro} \label{Prop:mindeg}
For a graph $G \neq G_{n,\delta}$ with order $n$ and minimum degree $\delta$, we have
$$\N(G) < \N(G_{n,\delta})\,.$$
\end{pro}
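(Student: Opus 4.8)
The plan is to fix a vertex $v$ of degree exactly $\delta$ (which exists since $\delta(G)=\delta$) and split the count along $v$:
$$\N(G) = \N(G-v) + \N(G)_v,$$
where $\N(G)_v$ denotes the number of connected induced subgraphs of $G$ containing $v$. I will estimate the two summands separately, observe that the sum of the two bounds equals $\N(G_{n,\delta})$, and then pin down exactly when both bounds are simultaneously attained.

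For the first summand, $G-v$ is a graph on $n-1$ vertices, so at most $2^{n-1}-1$ of its vertex subsets are nonempty, and \emph{every} nonempty subset induces a connected subgraph only when $G-v$ is complete; hence $\N(G-v)\le 2^{n-1}-1$ with equality if and only if $G-v\cong K_{n-1}$ (equivalently, the order-$(n-1)$ instance of the maximality of the complete graph recalled in the introduction). For the second summand, note that if $S\subseteq V(G)\setminus\{v\}$ is nonempty and $G[\{v\}\cup S]$ is connected then $S$ must meet $N(v)$; since $|N(v)|=\delta$, the number of subsets of $V(G)\setminus\{v\}$ meeting $N(v)$ is $2^{n-1}-2^{n-1-\delta}$, and including the one-vertex subgraph $\{v\}$ gives $\N(G)_v \le 1 + 2^{n-1} - 2^{n-1-\delta}$. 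Adding the two estimates yields $\N(G)\le 2^{n}-2^{n-1-\delta}$, and a one-line computation using $G_{n,\delta}-v = K_{n-1}$ shows that this common upper bound is exactly $\N(G_{n,\delta})$.

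It then remains to analyse equality. If $\N(G)=\N(G_{n,\delta})$, both estimates must be tight, so in particular $G-v\cong K_{n-1}$. Conversely, once $G-v\cong K_{n-1}$ the bound on $\N(G)_v$ is automatically tight as well: any nonempty $S$ meeting $N(v)$ spans a clique inside $G-v$ to which $v$ is joined, so $G[\{v\}\cup S]$ is connected. Hence equality holds precisely when $G-v\cong K_{n-1}$, and since $v$ has degree $\delta$ while all vertices of $K_{n-1}$ are interchangeable, this forces $G\cong G_{n,\delta}$. Therefore every $G\neq G_{n,\delta}$ of order $n$ with minimum degree $\delta$ satisfies $\N(G)<\N(G_{n,\delta})$.

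The argument is mostly bookkeeping, so I do not anticipate a serious obstacle; the only point that needs genuine care is the equality discussion, where one must notice that tightness of the first bound already entails tightness of the second (so the two estimates are consistent and jointly attainable) and that the freedom in choosing which $\delta$ vertices of $K_{n-1}$ receive the extra vertex does not affect the isomorphism type. Making these two observations explicit is what turns the chain of inequalities into the claimed strict inequality with unique extremal graph.
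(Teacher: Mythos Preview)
Your proof is correct and follows essentially the same approach as the paper's: fix a vertex $v$ of degree $\delta$, observe that maximising $\N(G)$ forces $G-v$ to be complete, and conclude that $G\cong G_{n,\delta}$. The paper states this in two sentences without the explicit computations, whereas you spell out the decomposition $\N(G)=\N(G-v)+\N(G)_v$, the numerical bounds, and the equality analysis; your version is a fully fleshed-out execution of the same idea.
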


\begin{proof}
Let $G$ be a graph with order $n$ and minimum degree $\delta$. Fix a vertex $u$ of degree $\delta$ in $G$. Then for $G$ to have the maximum $\N(.)$, the subgraph $G-u$ must be complete. In particular, $G_{n,\delta}$ uniquely realises the maximum $\N(.)$ over those graphs with order $n$ and minimum degree $\delta$.
\end{proof}


The graph $H_{n,\alpha}$ is obtained by taking disjoint copies of $K_{n-\alpha}$ and $\overline{K_{\alpha}}$ ($\alpha$ independent vertices), then adding an edge between every vertex of $K_{n-\alpha}$ and every vertex of $\overline{ K_{\alpha}}$.

\begin{pro}
For a graph $G \neq H_{n,\alpha}$ with order $n$ and independence number $\alpha$, we have
$$\N(G) < \N(H_{n,\alpha})\,.$$

For a graph $G \neq H_{n,n-\beta}$ with order $n$ and vertex cover number $\beta$, we have
$$\N(G) < \N(H_{n,n-\beta})\,.$$
\end{pro}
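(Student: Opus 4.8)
The plan is to deduce the vertex-cover statement from the independence-number statement, and to prove the latter by the same edge-addition trick used for Proposition~\ref{Prop:mindeg}. For the reduction: since a set $S\subseteq V(G)$ is a vertex cover exactly when $V(G)\setminus S$ is independent, one has $\alpha(G)+\beta(G)=|V(G)|$, so a graph of order $n$ has vertex cover number $\beta$ if and only if it has independence number $n-\beta$; thus $H_{n,n-\beta}$ is forced to be the extremal graph once $H_{n,\alpha}$ is shown to be extremal for independence number $\alpha$, and it suffices to prove the first inequality.

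For the independence-number statement I would first record the monotonicity principle: if $G'$ arises from $G$ by adding an edge $uv$ between two non-adjacent vertices, then $\N(G')>\N(G)$. The reason is that every vertex subset that induces a connected subgraph of $G$ still induces a connected subgraph of $G'$, whereas $\{u,v\}$ induces a connected subgraph of $G'$ but not of $G$, so $\N(G')\ge\N(G)+1$. I would also note the two facts about independence numbers that make this trick admissible: adding an edge never increases $\alpha$, and it destroys no independent set avoiding both endpoints of the new edge.

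Now let $G$ be a graph of order $n$ with $\alpha(G)=\alpha$ maximising $\N(\cdot)$ in this class; such a $G$ exists because the class is finite and nonempty — indeed $\alpha(H_{n,\alpha})=\alpha$, since an independent set of $H_{n,\alpha}$ that contains a vertex of the clique $K_{n-\alpha}$ must be a singleton (that vertex is adjacent to everything else), while an independent set disjoint from $K_{n-\alpha}$ is contained in $\overline{K_\alpha}$. Fix a maximum independent set $I$ of $G$, so $|I|=\alpha$ and $|V(G)\setminus I|=n-\alpha$. The key claim is that $V(G)\setminus I$ induces a clique in $G$ and that every vertex of $I$ is adjacent in $G$ to every vertex of $V(G)\setminus I$. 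Otherwise there is a non-edge $uv$ with $u,v\in V(G)\setminus I$, or with $u\in I$, $v\in V(G)\setminus I$; in both cases $I$ remains independent in $G+uv$, so $\alpha\le\alpha(G+uv)\le\alpha(G)=\alpha$, i.e.\ $G+uv$ lies in the same class, yet $\N(G+uv)>\N(G)$, contradicting maximality. Since $I$ is independent, $G$ has no edge inside $I$, and the claim now forces $G=H_{n,\alpha}$. Hence $H_{n,\alpha}$ is the unique maximiser, which is precisely the asserted strict inequality for all $G\neq H_{n,\alpha}$.

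The argument is short, so there is no real obstacle; the points that need genuine care are the strictness in the monotonicity step — it is what yields \emph{uniqueness} of the extremal graph rather than a mere bound — and checking that $G\mapsto G+uv$ keeps the independence number exactly equal to $\alpha$. The degenerate values $\alpha=1$ (where $H_{n,1}=K_n$) and $\alpha=n$ (where $H_{n,n}=\overline{K_n}$) are covered by the same reasoning and agree with the known fact that $K_n$ is the overall maximiser of $\N(\cdot)$.
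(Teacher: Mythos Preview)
Your proof is correct and follows essentially the same approach as the paper: fix a maximum independent set, use edge-addition monotonicity of $\N(\cdot)$ to force the complement of that set to be a clique with all cross-edges present, and then invoke $\alpha+\beta=n$ for the vertex-cover statement. You have simply spelled out more carefully the points the paper leaves implicit, in particular why $\alpha(G+uv)$ stays equal to $\alpha$ and why $\alpha(H_{n,\alpha})=\alpha$.
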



\begin{proof}
Let $G$ be a graph with order $n$ and independence number $\alpha$. Fix a set $S$ of $\alpha$ independent vertices in $G$. Then for $G$ to have the maximum $\N(.)$, the subgraph $G-S$ must be complete and every vertex in $S$ must be adjacent to all vertices of $G-S$. In particular, $H_{n,\alpha}$ is the unique graph that realises the maximum $\N(.)$ over all graphs with order $n$ and independence number $\alpha$.

For the second statement of the proposition, it is known (see as eg.~\cite{West}) that $\alpha+\beta=n$, thus completing the proof of the proposition.
\end{proof}


Our next theorem concerns vertex connectivity and edge connectivity. 

\begin{thm} \label{Thm:connectivity}
For a graph $G \neq G_{n, c}$ with order $n$ and vertex connectivity $c$, we have
$$\N(G) < \N(G_{n,c})\,.$$

For a graph $G \neq G_{n, e}$ with order $n$ and edge connectivity $e<n-1$, we have
$$\N(G) < \N(G_{n,e})\,.$$
\end{thm}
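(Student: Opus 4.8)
The plan is to establish the vertex‑connectivity statement by a direct structural argument and then to deduce the edge‑connectivity statement from it via Whitney's inequality $c(G)\le e(G)$, exploiting that $G_{n,c}$ grows (in the subgraph order) with its parameter.

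For the first statement I would argue as follows. We may assume $c\le n-2$, since for $c=n-1$ the only graph is $K_n=G_{n,c}$. One first checks that $G_{n,c}$ really has vertex connectivity $c$: its unique vertex $u$ of degree $c$ has $N(u)$ as a cut, while deleting fewer than $c$ vertices leaves the clique $K_{n-1}$ intact and a neighbour of $u$ surviving. Now suppose $G$ maximises $\N(\cdot)$ among graphs of order $n$ with vertex connectivity $c$; fix a minimum vertex cut $S$, $|S|=c$, and let $C_1,\dots,C_k$, $k\ge2$, be the components of $G-S$. Adding any edge within $S$, within some $C_i$, or between $S$ and some $C_i$ keeps $S$ a cut, hence cannot raise the vertex connectivity above $c$ (and edge addition never lowers it), so each such edge must already be present in $G$; thus $G$ is the graph $G(c;a_1,\dots,a_k)$ made of a clique on $S$ together with $k$ pairwise non‑adjacent cliques $C_1,\dots,C_k$ of orders $a_1,\dots,a_k$ ($\sum a_i=n-c$), each completely joined to $S$. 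Splitting connected induced subgraphs according to whether they meet $S$ should give
$$\N\big(G(c;a_1,\dots,a_k)\big)=(2^{c}-1)\,2^{\,n-c}+\sum_{i=1}^{k}\big(2^{a_i}-1\big),$$
since every subgraph meeting $S$ is automatically connected (each vertex of each $C_i$ sees all of $S$), whereas a subgraph avoiding $S$ lies in one clique $C_i$. It then remains to maximise $\sum_i(2^{a_i}-1)=\sum_i 2^{a_i}-k$ over partitions of $n-c$ into at least two parts: merging two parts never decreases $\sum_i 2^{a_i}$ and strictly decreases $k$, so $k=2$ is forced, and then strict convexity of $t\mapsto 2^{t}$ forces the partition $\{1,\,n-1-c\}$. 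That graph is exactly $G_{n,c}$.

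For the second statement I would first verify that $G_{n,e}$ has edge connectivity exactly $e$: the $e$ edges at the degree‑$e$ vertex form an edge cut, and any edge cut having that vertex together with a nonempty $P'\subseteq V(K_{n-1})$ on one side and a nonempty $Q$ on the other already contains the $|P'|\,|Q|\ge|P'|+|Q|-1=n-2\ge e$ edges of $K_{n-1}$ between $P'$ and $Q$ — and this is precisely where the hypothesis $e<n-1$ enters. Then, given any $G$ of order $n$ with edge connectivity $e$, set $c=c(G)$; Whitney's inequality gives $c\le e\le n-2$, so $G_{n,c}$ is defined and the first statement yields $\N(G)\le\N(G_{n,c})$ with equality only for $G=G_{n,c}$. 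Since $c\le e$, the graph $G_{n,c}$ is a spanning subgraph of $G_{n,e}$ (choose the $c$ neighbours of the extra vertex among its $e$ neighbours), so $\N(G_{n,c})\le\N(G_{n,e})$ with equality only for $c=e$. Chaining these, $\N(G)\le\N(G_{n,e})$ with equality only when $G=G_{n,c}$ and $c=e$, i.e.\ only for $G=G_{n,e}$.

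I expect the only slightly delicate points to be the two partition optimisations in the first part — which must be carried out carefully enough to pin down the extremal partition uniquely — and the verification that $G_{n,e}$ has edge connectivity exactly $e$, which is the sole place the restriction $e<n-1$ is used and is genuinely needed. I do not anticipate a real obstacle: once the structural reduction to the family $G(c;a_1,\dots,a_k)$ is in hand, the rest is a short count, a convexity estimate, and an appeal to Whitney's inequality.
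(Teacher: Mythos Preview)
Your proof is correct and follows the same overall architecture as the paper: reduce the vertex-connectivity maximiser to a clique on $S$ together with complete components fully joined to $S$, optimise the component sizes, and then derive the edge-connectivity case via Whitney's inequality $c\le e$ and monotonicity of $\N(G_{n,x})$ in $x$. The only tactical differences are that you carry out the optimisation via the closed formula $(2^{c}-1)2^{n-c}+\sum_i(2^{a_i}-1)$ and convexity of $2^t$, whereas the paper uses a single vertex-exchange step to force $n_1=1$, and you obtain monotonicity from the spanning-subgraph inclusion $G_{n,c}\subseteq G_{n,e}$ rather than the paper's explicit computation of $\N(G_{n,x})_u$; in fact you are more careful than the paper in verifying that $G_{n,e}$ really has edge connectivity $e$ (the only place $e<n-1$ is used) and in tracking the equality cases throughout the chain of inequalities.
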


\begin{proof}
Let $G$ be a graph with order $n$ and vertex connectivity $c$. Fix a set $S$ of $c$ vertices in $G$ such that $G-S$ is disconnected. Then for $G$ to have the maximum $\N(.)$, the subgraph $G-S$ must have precisely two components (connected parts), say $G_1, G_2$, each of which is a complete graph, and every vertex in $S$ must be adjacent to all vertices of $G-S=G_1 \cup G_2$. So it remains to determine the orders $n_1,n_2$ of $G_1=K_{n_1}$ and $G_2= K_{n_2}$, respectively. We can assume that $n_1 \leq n_2$.

Suppose that $n_1>1$. Fix $u_1 \in V(G_1)$ and construct from $G$ a new graph $G'$ by deleting $u_1$ and adding a new vertex $u_2$ adjacent to all vertices in $S\cup V(G_2)$. Then $G-u_1$ and $G'-u_2$ are isomorphic graphs. By construction of $G$ and $G'$, every subset of $V(G)$ that contains $u_1$ and a vertex in $S$, as well as every subset of $V(G')$ that contains $u_2$ and a vertex in $S$ always induces a connected graph. The number of such subgraphs of $G$ and $G'$ is $(2^c -1)2^{n_1-1} \cdot 2^{n_2}$ and $(2^c -1)2^{n_2} \cdot 2^{n_1-1}$, respectively. It follows that
$$\N(G)_{u_1}=\N(K_{n_1})_{u_1}+(2^c -1)2^{n_1-1} \cdot 2^{n_2}= 2^{n_1-1} + (2^c -1)2^{n_1-1+n_2}  $$
and
$$\N(G')_{u_2}=\N(K_{n_2+1})_{u_2}+(2^c -1)2^{n_2} \cdot 2^{n_1-1}=  2^{n_2} +(2^c -1)2^{n_2+n_1-1} \,.$$
Therefore, we obtain
\begin{align*}
\N(G')- \N(G)= \N(G')_{u_2} - \N(G)_{u_1}= 2^{n_2} -2^{n_1-1} >0\,,
\end{align*}
which shows that $\N(G') > \N(G)$. Hence $n_1=1$ for $G$ to have the maximum $\N(.)$. On the other hand, $S$ must induce a complete graph: this implies that the graph realising the maximum $\N(.)$ is indeed $G_{n, c}$.

\medskip
For a proof of the second statement of the theorem, first note that if $u$ is the unique vertex of degree $x < n-2$ in $G_{n, x}$, then $G_{n, x}-u=K_{n-1}$ whereas
$$\N(G_{n, x})_u= 1+(2^x-1)2^{n-1-x} = 1+ 2^{n-1} - 2^{n-1-x}$$ counts precisely the number of $u$-containing connected induced subgraphs of $G_{n, x}$. Thus $\N(G_{n, x})=\N(K_{n-1}) + \N(G_{n, x})_u$ is a strictly increasing function in $x$.

Now let $G$ be a graph with order $n$ and edge connectivity $e$. Denote by $c$ the vertex connectivity of $G$. Then we have $\N(G)\leq \N(G_{n, c}) \leq \N(G_{n, e}) $, where the first inequality stems from the first statement of the theorem, and the second inequality holds by Whitney's theorem~\cite{Whitney} that $c\leq e$. This completes the proof of the theorem. 
\end{proof}


The Tur\`an graph $T_{n,l}$ is a complete $l$-partite graph of order $n$ in which any two partition sets differ in cardinality by at most one. This famous graph appears in many extremal graph theory problems.

\begin{thm} 
For a graph $G \neq T_{n,l}$ with order $n$ and chromatic number $l$, we have
$$\N(G) < \N(T_{n,l})\,.$$
\end{thm}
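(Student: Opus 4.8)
The plan is to mirror the proof of Theorem~\ref{Thm:connectivity}: first reduce to complete multipartite graphs by a monotonicity argument, then settle the competition among those by a vertex‑shifting (convexity) argument.

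First I would record the elementary fact that adding an edge strictly increases $\N(.)$: if $G'=G+uv$, then every vertex set inducing a connected subgraph of $G$ still induces one in $G'$ (adding edges never disconnects an induced subgraph), while $\{u,v\}$ induces a connected subgraph of $G'$ but not of $G$; hence $\N(G)<\N(G')$. Now let $G$ have order $n$ and chromatic number $l$, fix an optimal proper colouring with (necessarily nonempty) colour classes $V_1,\dots,V_l$, and let $G^{*}$ be the complete $l$‑partite graph with parts $V_1,\dots,V_l$. Then $G$ is a spanning subgraph of $G^{*}$, the colouring remains proper for $G^{*}$, and $G\subseteq G^{*}$ forces $\chi(G^{*})=l$; by repeated use of the monotonicity fact, $\N(G)\le\N(G^{*})$, with equality if and only if $G=G^{*}$. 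So it suffices to prove that, among all complete $l$‑partite graphs of order $n$ with every part nonempty, $T_{n,l}$ uniquely maximises $\N(.)$.

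Next I would compare two such graphs exactly as in Theorem~\ref{Thm:connectivity}. The key observation is that for $G=K_{n_1,\dots,n_l}$ and $S\subseteq V(G)$, the induced subgraph $G[S]$ is connected precisely when $S$ meets at least two of the parts or $|S|=1$. Hence, for a vertex $u$ lying in a part of size $m$, the subsets $S\ni u$ with $G[S]$ disconnected are exactly the sets $\{u\}\cup T$ with $\emptyset\neq T\subseteq (u\text{'s part})\setminus\{u\}$, so
$$\N(G)_{u}=2^{n-1}-\bigl(2^{m-1}-1\bigr)=2^{n-1}-2^{m-1}+1\,.$$
Assume $n_1\le\cdots\le n_l$ and $n_l\ge n_1+2$, fix $u$ in the part of size $n_l$, and construct $G'$ from $G$ by deleting $u$ and adding a new vertex $u'$ joined to all vertices outside the part of size $n_1$ (i.e.\ placed into that part). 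Then $G'$ is again a complete $l$‑partite graph of order $n$ with all parts nonempty, so $\chi(G')=l$, and $G-u\cong G'-u'$. Therefore
$$\N(G')-\N(G)=\N(G')_{u'}-\N(G)_{u}=\bigl(2^{n-1}-2^{n_1}+1\bigr)-\bigl(2^{n-1}-2^{n_l-1}+1\bigr)=2^{n_l-1}-2^{n_1}>0\,,$$
since $n_l-1\ge n_1+1$. Thus a complete $l$‑partite graph whose part sizes differ by more than one is not extremal, so the extremal graph must be $T_{n,l}$. (Equivalently, counting disconnected subsets gives the closed form $\N(K_{n_1,\dots,n_l})=2^{n}+n+l-1-\sum_{i}2^{n_i}$, which is maximised exactly when $\sum_i 2^{n_i}$ is minimal; by convexity of $x\mapsto 2^{x}$ this happens only for the balanced partition.)

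Finally, for the uniqueness claim in the theorem I would chase the two equality cases: if $\N(G)=\N(T_{n,l})$, then $\N(G)=\N(G^{*})$ forces $G=G^{*}$, i.e.\ $G$ is complete $l$‑partite, and $\N(G^{*})=\N(T_{n,l})$ together with the strict inequality above forces $G^{*}=T_{n,l}$, hence $G=T_{n,l}$. I do not anticipate a genuine obstacle here; the only points that need care are the strictness in the monotonicity step (supplied by the pair $\{u,v\}$) and checking that each graph produced along the way still has chromatic number exactly $l$, which holds because all colour classes, respectively all parts, remain nonempty throughout.
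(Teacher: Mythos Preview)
Your proof is correct and follows essentially the same route as the paper: reduce to complete $l$-partite graphs via edge-addition monotonicity, then balance the parts by the same vertex-shifting computation $\N(G')_{u'}-\N(G)_{u}=2^{n_l-1}-2^{n_1}$, which is exactly the paper's $2^{|A|-1}-2^{|B|}$. Your version is more explicit (you justify the connectivity characterisation in $K_{n_1,\dots,n_l}$, the chromatic-number preservation, and the equality cases), and the parenthetical closed form $\N(K_{n_1,\dots,n_l})=2^{n}+n+l-1-\sum_i 2^{n_i}$ with the convexity remark is a nice alternative endgame, but it is not a genuinely different argument.
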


\begin{proof}
Let $G$ be a graph with order $n$ and chromatic number $l$. Partition the vertices of $G$ according to their colours. Then each of the $l$ partition sets (colour classes) is an independent set whose sizes are $n_1,n_2,\ldots,n_l$ with $n_1 + n_2 + \cdots + n_l=n$. Thus, for $G$ to have the maximum $\N(.)$, it has to be a complete $l$-partite graph with these partite sets. In particular, only $K_n=T_{n,n}$ realises the maximum if $l=n$. In what follows, we assume $l<n$. Let $A$ and $B$ be two partite sets having the greatest and smallest cardinality, respectively, among all the $l$ partite sets of $G$. Then we have $|A|> 1$. Let us show that $|A|-1 \leq |B| \leq |A|$. 

Suppose not, fix $a \in A$. Contruct from $G$ a new graph $G'$ by deleting vertex $a$, then taking a new vertex $b$ with the same colour as those in $B$, and adding an edge between $b$ and every vertex in $V(G) \backslash (B \cup \{a,b\})$. Note that $\N(G')- \N(G)=\N(G')_b -\N(G)_a $ since the graphs $G-a$ and $G'-b$ are isomorphic by construction. Moreover, every subset of vertices of $G$ that contains both $a$ and an element of $V(G) \backslash A$, as well as every subset of vertices of $G'$ that contains both $b$ and an element of $V(G') \backslash (B \cup \{b\})$ always induces a connected graph. On the other hand, $A$ and $B \cup \{b\}$ are independent sets in $G$ and $G'$, respectively. Theorefore, we get $\N(G)_a=1+ (2^{|V(G)| -|A|}-1) 2^{|A|-1}$ and $\N(G')_b=1+ (2^{|V(G')| -|B|-1}-1) 2^{|B|}$. Hence
\begin{align*}
\N(G')- \N(G)=\N(G')_b -\N(G)_a &=2^{|V(G')|-1} - 2^{|B|}+  2^{|A|-1} - 2^{|V(G)|-1} \\
&= 2^{|A|-1} - 2^{|B|} >0
\end{align*}
by the inequality $|B|< |A|-1$. This is a contradiction to the choice of $G$ that $\N(G') \leq \N(G)$. Consequently, for $G$ to have the maximum $\N(.)$, it has to be a complete $l$-partite graph with $|A|-1 \leq |B| \leq |A|$. By the choice of the sets $A$ and $B$, we conclude that the graph realising the maximum $\N(.)$ is indeed the Tur\`an graph $T_{n,l}$.
\end{proof}

For our next theorem, we need to start with a lemma which can also be found in~\cite{AudaceGirth2018,Audacegenral2023}. 

\begin{lem}[Lemma 2.1~\cite{AudaceGirth2018}, Lemma 1~\cite{Audacegenral2023}]\label{singleBranch}

Let $L,M,R$ be three non-trivial connected graphs whose vertex sets are pairwise disjoints. Let $l \in V(L),~ r\in V(R)$ and $u,v \in V(M)$ be fixed vertices such that $u \neq v$. Denote by $G$ the graph obtained from $L,M,R$ by identifying $l$ with $u$, and $r$ with $v$. Similarly, let $G'$ (resp. $G''$) be the graph obtained from $L,M,R$ by identifying both $l,r$ with $u$ (resp. both $l,r$ with $v$); see Figure~\ref{diagGGpGpp} for a diagram of these graphs. Then it holds that
	\begin{align*}
	\N(G') > \N(G) ~~ \text{or} ~~ \N(G'') > \N(G)\,.
	\end{align*}
	\begin{figure}[htbp]\centering
		\definecolor{qqqqff}{rgb}{0.,0.,1.}
		\resizebox{0.7\textwidth}{!}{%
		\begin{tikzpicture}[line cap=round,line join=round,>=triangle 45,x=1.0cm,y=1.0cm]
		rectangle (17.25469285535401,15.543118314192673);
		\draw [dash pattern=on 2pt off 2pt] (8.02,12.78)-- (10.02,12.78);
		\draw [dash pattern=on 2pt off 2pt] (10.02,12.78)-- (10.,12.);
		\draw [dash pattern=on 2pt off 2pt] (10.,12.)-- (10.02,11.26);
		\draw [dash pattern=on 2pt off 2pt] (10.02,11.26)-- (8.,11.26);
		\draw [dash pattern=on 2pt off 2pt] (8.,11.26)-- (8.,12.);
		\draw [dash pattern=on 2pt off 2pt] (8.,12.)-- (8.02,12.78);
		\draw [dash pattern=on 2pt off 2pt] (6.48,12.56)-- (8.,12.);
		\draw [dash pattern=on 2pt off 2pt] (6.48,11.48)-- (8.,12.);
		\draw [dash pattern=on 2pt off 2pt] (6.48,12.56)-- (6.48,11.48);
		\draw [dash pattern=on 2pt off 2pt] (10.,12.)-- (11.48,12.48);
		\draw [dash pattern=on 4pt off 4pt] (10.,12.)-- (11.46,11.46);
		\draw [dash pattern=on 2pt off 2pt] (11.48,12.48)-- (11.46,11.46);
		\draw (8.002875389073733,12.308886980787792) node[anchor=north west] {$l,u$};
		\draw (9.126357554291436,12.19457622267538) node[anchor=north west] {$v,r$};
		\draw (6.736460949518799,12.287683803799489) node[anchor=north west] {$L$};
		\draw (10.70036563986977,12.304751529462257) node[anchor=north west] {$R$};
		\draw [dash pattern=on 2pt off 2pt] (4.,10.)-- (6.,10.);
		\draw [dash pattern=on 2pt off 2pt] (6.,10.)-- (5.98,9.22);
		\draw [dash pattern=on 2pt off 2pt] (5.98,9.22)-- (6.,8.48);
		\draw [dash pattern=on 2pt off 2pt] (6.,8.48)-- (3.98,8.48);
		\draw [dash pattern=on 2pt off 2pt] (3.98,8.48)-- (3.98,9.22);
		\draw [dash pattern=on 2pt off 2pt] (3.98,9.22)-- (4.,10.);
		\draw [dash pattern=on 2pt off 2pt] (3.12,10.28)-- (3.98,9.22);
		\draw [dash pattern=on 2pt off 2pt] (2.6,9.6)-- (3.98,9.22);
		\draw [dash pattern=on 2pt off 2pt] (3.12,10.28)-- (2.6,9.6);
		\draw (4.031552359486242,9.502599906980058) node[anchor=north west] {$l,u,r$};
		\draw (6.006612285785838,9.41691066509247) node[anchor=north west] {$v$};
		\draw (3.008420807842294,10.0050302314765) node[anchor=north west] {$L$};
		\draw (3.087042324067116,8.845683517584332) node[anchor=north west] {$R$};
		\draw [dash pattern=on 2pt off 2pt] (12.,10.)-- (14.,10.);
		\draw [dash pattern=on 2pt off 2pt] (14.,10.)-- (13.98,9.22);
		\draw [dash pattern=on 2pt off 2pt] (13.98,9.22)-- (14.,8.48);
		\draw [dash pattern=on 2pt off 2pt] (14.,8.48)-- (11.98,8.48);
		\draw [dash pattern=on 2pt off 2pt] (11.98,8.48)-- (11.98,9.22);
		\draw [dash pattern=on 2pt off 2pt] (11.98,9.22)-- (12.,10.);
		\draw [dash pattern=on 2pt off 2pt] (13.98,9.22)-- (14.9,10.32);
		\draw [dash pattern=on 4pt off 4pt] (13.98,9.22)-- (15.42,9.66);
		\draw [dash pattern=on 2pt off 2pt] (14.9,10.32)-- (15.42,9.66);
		\draw (12.000266144323993,9.432599906980058) node[anchor=north west] {$u$};
		\draw (12.761465421630708,9.532775213766935) node[anchor=north west] {$v,l,r$};
		\draw (14.542891846445562,10.147962505813732) node[anchor=north west] {$R$};
		\draw [dash pattern=on 2pt off 2pt] (2.66,8.42)-- (3.98,9.22);
		\draw [dash pattern=on 2pt off 2pt] (3.44,7.86)-- (3.98,9.22);
		\draw [dash pattern=on 2pt off 2pt] (2.66,8.42)-- (3.44,7.86);
		\draw (8.654102536581874,12.838560337733874) node[anchor=north west] {$M$};
		\draw (4.70415799076956,10.0550302314765) node[anchor=north west] {$M$};
		\draw (12.718182533719721,10.0550302314765) node[anchor=north west] {$M$};
		\draw [dash pattern=on 2pt off 2pt] (13.98,9.22)-- (15.44,8.52);
		\draw [dash pattern=on 2pt off 2pt] (14.78,7.92)-- (13.98,9.22);
		\draw [dash pattern=on 2pt off 2pt] (15.44,8.52)-- (14.78,7.92);
		\draw [dash pattern=on 2pt off 2pt] (15.44,8.52)-- (14.78,7.92);
		\draw [dash pattern=on 2pt off 2pt] (15.44,8.52)-- (14.78,7.92);
		\draw (14.521338055883508,8.817237308146387) node[anchor=north west] {$L$};
		\draw (8.682548746019819,11.18282315480661) node[anchor=north west] {$G$};
		\draw (4.618293442095093,8.37342849987477) node[anchor=north west] {$G'$};
		\draw (12.675250259382487,8.330496225537535) node[anchor=north west] {$G''$};
		
		\draw [fill=black] (8.02,12.78) circle (0.5pt);
		\draw [fill=black] (10.02,12.78) circle (0.5pt);
		\draw [fill=black] (8.,11.26) circle (0.5pt);
		\draw [fill=black] (10.02,11.26) circle (0.5pt);
		\draw [fill=black] (8.,12.) circle (2.0pt);
		\draw [fill=black] (10.,12.) circle (2.0pt);
		\draw [fill=black] (6.48,12.56) circle (0.5pt);
		\draw [fill=black] (6.48,11.48) circle (0.5pt);
		\draw [fill=qqqqff] (11.48,12.48) circle (0.5pt);
		\draw [fill=qqqqff] (11.46,11.46) circle (0.5pt);
		\draw [fill=black] (4.,10.) circle (0.5pt);
		\draw [fill=black] (6.,10.) circle (0.5pt);
		\draw [fill=black] (3.98,8.48) circle (0.5pt);
		\draw [fill=black] (6.,8.48) circle (0.5pt);
		\draw [fill=black] (3.98,9.22) circle (2.0pt);
		\draw [fill=black] (5.98,9.22) circle (2.0pt);
		\draw [fill=black] (3.12,10.28) circle (0.5pt);
		\draw [fill=black] (2.6,9.6) circle (0.5pt);
		\draw [fill=black] (12.,10.) circle (0.5pt);
		\draw [fill=black] (14.,10.) circle (0.5pt);
		\draw [fill=black] (11.98,8.48) circle (0.5pt);
		\draw [fill=black] (14.,8.48) circle (0.5pt);
		\draw [fill=black] (11.98,9.22) circle (2.0pt);
		\draw [fill=black] (13.98,9.22) circle (2.0pt);
		\draw [fill=black] (15.44,8.52) circle (0.5pt);
		\draw [fill=black] (14.78,7.92) circle (0.5pt);
		\draw [fill=qqqqff] (14.9,10.32) circle (0.5pt);
		\draw [fill=qqqqff] (15.42,9.66) circle (0.5pt);
		\draw [fill=black] (3.44,7.86) circle (0.5pt);
		\draw [fill=black] (2.66,8.42) circle (0.5pt);
		\end{tikzpicture}}
		\caption{The graphs $G,G',G''$ described in Lemma~\ref{singleBranch}.}\label{diagGGpGpp}
	\end{figure}
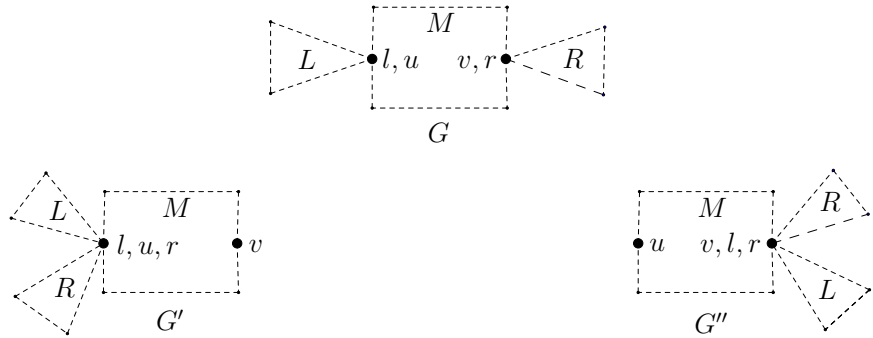

\end{lem}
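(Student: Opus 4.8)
The plan is to reduce the statement to a short computation, by writing each of $\N(G)$, $\N(G')$, $\N(G'')$ as a polynomial in a handful of counting invariants of $L$, $M$ and $R$. The key structural fact is that in all three graphs the pieces $L$ and $R$ are attached to $M$ at cut vertices (at $u$ and $v$ in $G$, and both at $u$, resp.\ both at $v$, in $G'$ and $G''$); hence every connected induced subgraph splits uniquely along these cut vertices, and the number of connected induced subgraphs meeting two of the pieces is the product of the numbers of connected induced subgraphs of the two pieces through the relevant attachment vertex.

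Concretely, I would put $a=\N(L)_l-1$ and $b=\N(R)_r-1$ (so $a,b\ge1$, since $L$ and $R$ are non-trivial), write $\lambda_0=\N(L)-\N(L)_l$ and $\rho_0=\N(R)-\N(R)_r$ for the numbers of connected induced subgraphs of $L$, resp.\ $R$, avoiding $l$, resp.\ $r$, and let $p$, $q$, $t$ be the numbers of connected induced subgraphs of $M$ containing $u$ but not $v$, containing $v$ but not $u$, and containing both $u$ and $v$, respectively (so $\N(M)_u=p+t$ and $\N(M)_v=q+t$). Classifying the connected induced subgraphs of $G$ by which of $V(L)\setminus\{l\}$, $V(M)$, $V(R)\setminus\{r\}$ they meet, and gluing freely across $u$ and $v$, one obtains
\[
\N(G)=\N(M)+\lambda_0+\rho_0+a(p+t)+b(q+t)+ab\,t,
\]
whereas the analogous count for $G'$ and $G''$, where $L$ and $R$ hang off a single vertex, gives
\[
\N(G')=\N(M)+\lambda_0+\rho_0+(p+t)(ab+a+b),\qquad\N(G'')=\N(M)+\lambda_0+\rho_0+(q+t)(ab+a+b).
\]

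Subtracting, the common terms — including $t$ — cancel, and one is left with
\[
\N(G')-\N(G)=b\bigl((a+1)p-q\bigr),\qquad\N(G'')-\N(G)=a\bigl((b+1)q-p\bigr).
\]
Suppose for contradiction that neither difference is positive. Since $a,b\ge1$, this forces $(a+1)p\le q$ and $(b+1)q\le p$, hence $(a+1)(b+1)p\le(b+1)q\le p$; as $(a+1)(b+1)\ge4$, this is impossible unless $p=0$. But $u\ne v$, so $\{u\}$ is a connected induced subgraph of $M$ containing $u$ and not $v$, i.e.\ $p\ge1$ — a contradiction. Therefore $\N(G')>\N(G)$ or $\N(G'')>\N(G)$.

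The real work, and the only genuine obstacle, lies in establishing the three displayed formulas: one has to check that the decomposition along the cut vertices $u,v$ is a bijection — in particular that any connected induced subgraph of $G$ meeting both $V(L)\setminus\{l\}$ and $V(M)$ must contain $u$ (and the symmetric statement for $R$ and $v$), and that the subgraphs whose $M$-part already contains both $u$ and $v$ are counted with the correct multiplicity in each formula. The non-triviality of $L$ and $R$ is used only to get $a,b\ge1$, and the hypothesis $u\ne v$ only to get $p\ge1$; granting the formulas, the conclusion is the three-line computation above.
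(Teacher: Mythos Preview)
The paper does not actually prove this lemma: it is quoted verbatim from the author's earlier work \cite{AudaceGirth2018,Audacegenral2023} and used as a black box, so there is no in-paper argument to compare against. That said, your proof is correct and is essentially the standard cut-vertex decomposition argument one expects for such a ``branch-shifting'' lemma; the formulas
\[
\N(G')-\N(G)=b\bigl((a+1)p-q\bigr),\qquad \N(G'')-\N(G)=a\bigl((b+1)q-p\bigr)
\]
follow exactly as you describe, and the final contradiction from $p\ge 1$ (the singleton $\{u\}$) and $a,b\ge 1$ (non-triviality of $L,R$) is clean. One small remark on presentation: in your case analysis it is worth saying explicitly that the case ``$S$ meets $V(L)\setminus\{l\}$ and $V(R)\setminus\{r\}$ but not $V(M)$'' is vacuous in all three graphs because both attachment points lie in $V(M)$; this is implicit in your bijection claim but easy to overlook.
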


\medskip
An edge of a graph $G$ with an endvertex of degree $1$ is called a pendant edge of $G$. The star of order $n$ is denoted by $S_{n}$.

For positive integers $b, n$ such that $b< n-2$, define $J_{n,b}$ to be the graph obtained by identifying one vertex of $K_{n-b}$ with the central vertex of $S_{b+1}$. For $b=n-2$, the graph $J_{n,b}$ is defined as that obtained by inserting one vertex into an edge of $S_{n-1}$.

\begin{thm}
For a graph $G \neq J_{n, b}$ with order $n$ and $b$ bridges, we have
$$\N(G) < \N(J_{n,b})\,.$$
\end{thm}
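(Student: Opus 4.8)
The plan is to mimic the structure of the earlier proofs: reduce an arbitrary extremal graph step by step to $J_{n,b}$, using Lemma~\ref{singleBranch} to eliminate the ``branching'' of the bridges and a direct swapping argument to force the non-pendant part to be complete. Let $G$ be a graph with order $n$ and exactly $b$ bridges that maximises $\N(\cdot)$. Since an edge is a bridge if and only if it lies in no cycle, the bridges of $G$ form a forest $F$ spanning the same vertex set, and contracting each non-trivial $2$-edge-connected block (``brick'') of $G$ to a point turns $G$ into a tree $\mathcal{T}$ on the bricks, with exactly $b$ edges. I would first argue that $\mathcal{T}$ must be a star: if some bridge-path in $G$ had an internal brick $M$ incident to bridges leading to two different ``sides'' $L$ and $R$, then the configuration is exactly the graph $G$ of Lemma~\ref{singleBranch} (with $L,R$ the two rooted pieces hanging off $M$ and $u\neq v$ the attachment vertices in $M$), so one of the two re-rootings $G',G''$ strictly increases $\N(\cdot)$ while preserving the order and, one checks, the number of bridges (re-identifying two attachment points of a brick does not create or destroy any cycle through the bridges being moved). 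Hence in the extremal graph all bridges are incident to a single brick $B$; i.e.\ $G$ is obtained from one $2$-edge-connected graph $B$ by attaching some pendant-edge trees, and since moving a pendant subtree up to a single common vertex again only gains by Lemma~\ref{singleBranch} (or elementary counting), all $b$ bridges become pendant edges at one vertex of $B$.

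So the extremal graph is: a $2$-edge-connected graph $B$ of order $n-b$ together with $b$ pendant edges at a fixed vertex $w\in V(B)$ — provided $b<n-2$, so that $|V(B)|\ge 3$ and a non-trivial $2$-edge-connected $B$ exists. Next I would show $B=K_{n-b}$: among $2$-edge-connected graphs on a fixed vertex set, adding any missing edge strictly increases $\N(\cdot)$ (adding an edge can only create new connected induced subgraphs and destroys none, and there is at least one new one whenever the added edge joins two non-adjacent vertices), and adding an edge inside $B$ keeps $B$ two-edge-connected and keeps all $b$ pendant edges as the only bridges. This forces $B=K_{n-b}$ and hence $G=J_{n,b}$ in the range $b<n-2$. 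For the boundary case $b=n-2$ I would treat it separately: here $|V(B)|=2$, so a genuine $2$-edge-connected brick cannot exist (a single edge is itself a bridge), and a short case analysis — the bridge-tree is a tree on $n-1\le n$ blocks that are either edges or triangles, with at most one triangle — shows the optimum is the graph described in the statement, namely $S_{n-1}$ with one edge subdivided (equivalently, a triangle with $n-3$ extra pendant edges at one of its vertices).

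To finish I would compute $\N(J_{n,b})$ explicitly to confirm it strictly beats every competitor and that the extremal graph is unique. Writing $u$ for the degree-one vertices' common neighbour $w$ in $K_{n-b}$, a connected induced subgraph either avoids all $b$ pendant vertices (there are $\N(K_{n-b})=2^{\,n-b}-1$ of these) or contains at least one pendant vertex, in which case it must contain $w$ and then any subset of the remaining $n-b-1$ vertices of $K_{n-b}$ together with any nonempty subset of the $b$ pendants, giving $(2^{b}-1)\cdot 2^{\,n-b-1}$; hence $\N(J_{n,b})=2^{\,n-b}-1+(2^{b}-1)2^{\,n-b-1}$. The same bookkeeping applied to a hypothetical extremal graph that differs from $J_{n,b}$ at any of the reduction steps yields a strictly smaller value, establishing both the inequality and uniqueness.

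The main obstacle I anticipate is the careful verification that the operations used at each reduction step (the Lemma~\ref{singleBranch} re-rootings, the pendant-subtree relocations, and adding edges inside $B$) preserve the parameter ``number of bridges is exactly $b$''. Moving a pendant tree or re-identifying two vertices of a block can in principle turn a former non-bridge into a bridge or vice versa; the key point to nail down is that the edges being manipulated are exactly the bridges themselves and lie in no cycle before or after, while the $2$-edge-connected part is only ever augmented, never broken — so the bridge count is invariant throughout. Once that invariance is in hand, the rest is the routine ``add edges / consolidate pendants'' extremal argument already used in the preceding proofs, plus the separate but elementary handling of $b=n-2$.
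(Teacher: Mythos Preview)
Your overall strategy matches the paper's --- use Lemma~\ref{singleBranch} to reduce an arbitrary bridge configuration to the pendant-edge situation --- but the specific application you describe does not do what you claim. Taking $M$ to be a $2$-edge-connected brick with attachment vertices $u\neq v$ and re-rooting both sides $L,R$ to the same vertex leaves the brick $M$ intact and leaves the bridge-tree $\mathcal{T}$ isomorphic to what it was before: you have only shuffled attachment points \emph{within} $M$. In particular this operation neither converts any bridge into a pendant edge nor eliminates a non-trivial leaf brick (a leaf brick on $\geq 3$ vertices already has a single attachment vertex, so your step never touches it). Hence the conclusion ``in the extremal graph all bridges are incident to a single brick $B$; i.e.\ $G$ is one $2$-edge-connected graph $B$ with pendant-edge trees attached'' simply does not follow from the argument as written.

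The paper closes this gap with the much simpler specialisation $M=uv$, the bridge \emph{itself} (a single edge). Then in $G'$ (resp.\ $G''$) the vertex $v$ (resp.\ $u$) becomes pendant, so that bridge is now a pendant edge, and every other edge transparently keeps its bridge/non-bridge status since $L$ and $R$ are only being re-glued at a point. Iterating over all non-pendant bridges lands you in the class of graphs with $n$ vertices and exactly $b$ pendant vertices, and from there the paper just quotes the extremal result for that class from~\cite{AudaceCut2019,EricAudace}; your self-contained ``complete the core, gather the pendants'' argument is essentially a re-derivation of that citation and is fine once you arrive there. A minor side remark: the case $b=n-2$ is in fact vacuous for connected simple graphs (deleting all $b$ bridges leaves $b+1$ components each of size $1$ or $\geq 3$, forcing $b=n-1$ or $b\leq n-3$), so neither your nor the paper's separate treatment matters --- but your claimed equivalence ``$S_{n-1}$ with one edge subdivided $\equiv$ triangle with $n-3$ pendants'' is wrong in any case, the former being a tree and the latter unicyclic.
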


\begin{proof}
Let $G$ be a graph with order $n>2$ and $b$ bridges. Fix a brigde with endvertices $u,v$ in $G$. Then the specialisation $M=uv$ in Lemma~\ref{singleBranch} shows that all edges of $G$ preserve their status (bridges/non bridges) in both $G'$ and $G''$.  Moreover, the bridge $uv$ becomes a pendant edge in both $G'$ and $G''$. Thus it suffices to prove the theorem for $G$ in the class of those graphs with order $n$ and $b$ pendant edges (or pendant vertices). 

Fortunately, in~\cite{AudaceCut2019,EricAudace} we determined the graph structure that maximises the number of connected induced subgraphs, given both order and number of pendant vertices: If $b<n-2$, the extremal graph can be obtained by identifying one vertex of $K_{n-b}$ with the central vertex of $S_{b+1}$; if $b=n-2$, it can be obtained by inserting one vertex into an edge of $S_{n-1}$. The theorem follows.
\end{proof}

\section{Concluding comments}

Among all graphs with order $n$ and minimum degree $\delta$, the minimum number of connected induced subgraphs is attained by the path if $\delta=1$~\cite{Audacegenral2018} and by the cycle if $\delta=2$~\cite{Audacegenral2018, AudaceCut2019}. This motivates the following problem: What is the minimum number of connected induced subgraphs among all graphs with given order and minimum degree $\delta >2$? Recall that Proposition~\ref{Prop:mindeg} answers the maximisation counterpart of this problem.

Theorem~\ref{Thm:connectivity} establishes that the graph with given order and connectivity, and maximum number of connected induced subgraphs is the same in the case of vertex- and edge-connectivity. As it should be mentionned (see~\cite{Audacegenral2018, AudaceCut2019}), the path (resp. cycle) also has minimum number of connected induced subgraphs among all graphs with order $n$ and connectivity $1$ (resp. connectivity $2$). Other relations between number of connected induced subgraphs and graph connectivity have not been established so far. Therefore, the other natural question at this point is to ask for the general case where connectivity is more than $2$. It appears to us that even the case of connectivity $3$ can be very difficult.

\end{document}